\documentclass[11pt]{amsart}

\usepackage[margin=2cm]{geometry}
\usepackage{palatino}
\usepackage{latexsym, dsfont, amssymb, amsfonts, amsmath, amsthm, graphics, graphicx, subfigure, bbm,tikz, mathtools}
\usepackage{pgfplots}
\pgfplotsset{compat=1.18}
\usepackage[pdftex,colorlinks,linkcolor=blue,menucolor=red,backref=false,bookmarks=true]{hyperref}

\usepackage{wrapfig, graphicx}
\graphicspath{ {images/} }
\DeclareGraphicsExtensions{.png,.pdf}
\usepackage[T1]{fontenc}

\newcommand{\field}[1]{\mathbb{#1}}
\newcommand{\R}{\field{R}}

\newcommand{\E}{\field{E}}

\newcommand{\p}{\field{P}}

\newcommand{\Area}{\mathrm{Area}}
\newtheorem{theorem}{Theorem}

\newtheorem*{convention}{Convention}

\newtheorem{proposition}{Proposition}

\newtheorem{definition}{Definition}
\newtheorem{lemma}{Lemma}

\newtheorem{remark}{Remark}

\newtheorem{example}{Example}

\numberwithin{equation}{section}

\begin{document}
\title{A property of log-concave and weakly-symmetric distributions for two step approximations of random variables}

\author{Mihaela-Adriana Nistor}
\address{University of Bucharest\
  Faculty of Mathematics and Computer Science\
  Str. Academiei nr.14, sector 1, C.P. 010014, Bucharest, Romania}
\email[Mihaela-Adriana Nistor]{mihaelaadriana.nistor@gmail.com}

\author{Ionel Popescu}
\address{University of Bucharest\
  Faculty of Mathematics and Computer Science\
  Str. Academiei nr.14, sector 1, C.P. 010014, Bucharest, Romania}
\address{Institute of Mathematics ``Simion Stoilow'' of the Romanian Academy\ 21 Calea Grivitei Street, 010702 Bucharest, Romania
  Bucharest, Romania}
\email[Ionel Popescu]{ionel.popescu@fmi.unibuc.ro, ionel.popescu@imar.ro}

\thanks{}
\date{}

\begin{abstract} 
In this paper we introduce a generalization of classical risk measures in which the risk is represented by a step function taking two values, corresponding to two endogenously determined market regimes. This extends the traditional framework where risk measures map random variables to single real numbers.
For the quadratic loss function, we study the optimization problem of determining the optimal regime threshold and corresponding values. In the case of log-concave distributions we give conditions for the uniqueness of the regime changing.  We treat the case of one dimension and also of multi-dimensions for elliptic distributions.  

We demonstrate the necessity of convexity through counterexamples.  

\textbf{Keywords:} Risk measures, regime-switching, log-concave distributions, convex optimization, integral inequalities

\textbf{MSC Classification:} 91G70, 60E15, 26D15, 49K30, 90C25 
\end{abstract}

\maketitle


\section{Introduction}
Risk measurement is a central topic in probability, optimization, and mathematical finance, where the loss of a financial position is modeled by a real-valued random variable and quantified via a real-valued functional. A foundational axiomatic framework for such functionals was introduced by Artzner et al.~\cite{artzner1999coherent} through the notion of coherent risk measures, characterized by monotonicity, translation invariance, positive homogeneity, and subadditivity. Subsequent work relaxed positive homogeneity and led to convex risk measures (see, e.g.,~\cite{foellmer2002convex,foellmer2016stochastic}), establishing deep connections with convex analysis and optimization (cf.~\cite{follmer2002convex,rockafellar2000optimization}). For broader discussions and comparisons of risk measures and for further perspectives (including dynamic aspects and statistical considerations), see e.g.~\cite{mcneil2015quantitative,emmer2013best,ziegel2016coherence,chun2012conditional,artzner1999application,follmer2015axiomatic,riedel2004dynamic}.

Classical risk measures such as Value-at-Risk (VaR) and Expected Shortfall (ES) compress the distribution of losses into a single number~\cite{acerbi2002expected}. While such scalar summaries are useful in practice, they do not directly reflect the presence of different market states (e.g., calm versus stressed regimes) that are commonly modeled via regime-switching dynamics~\cite{hamilton1989nonstationary,ang2002regime,hamilton1994arch,kim1999statespace} and motivate state-dependent or dynamic risk measures~\cite{acciaio2011dynamic}.

In this paper we propose and analyze a refinement of the classical setup in which risk is represented not by a single scalar but by a simple function with finitely many regimes. Concretely, given a loss function $G:\R\to[0,\infty)$ and a class of measurable functions $\widetilde{R}$, we consider the general approximation problem
\begin{equation}\label{e:0-intro}
 f^*=\underset{f\in\widetilde{R}}{\mathrm{argmin }}\, \mathbb{E}[G(X-f(X))].
\end{equation}

The class $\widetilde{R}$ we consider here consists of two-regime (two-step) functions,
\[
 f(x)=\alpha\,\mathbbm{1}_{(-\infty,t]}(x)+\beta\,\mathbbm{1}_{(t,\infty)}(x),
\]
where the regime levels $\alpha,\beta\in\R$ and the threshold $t\in\R$ are chosen optimally and depend on the law of $X$.

Our primary focus is on the loss $G(x)=x^2$ and the problem of finding an optimal two-step approximation of $X$ in an $L^2$ sense.

This formulation is closely related to optimal partitioning and quantization, where one seeks a partition and representative values minimizing an expected distortion; a central issue there is \emph{uniqueness} of the optimal partition (or of the corresponding Lloyd fixed point), which can fail without structural assumptions. For log-concave (equivalently ILR) densities, uniqueness and convergence properties are well understood for broad classes of convex losses; see Trushkin~\cite{trushkin1982sufficient,trushkin1984monotony} and the statistical treatment of unique optimal partitions by Mease and Nair~\cite{mease2006unique} (see also Eubank's survey~\cite{eubank1988optimal}). In the present paper we focus on the \emph{two-regime} case and obtain a more explicit, quantitative description: for the quadratic loss the optimization reduces to maximizing an explicit one-parameter functional $t\mapsto f_X(t)$, and under log-concavity we give conditions ensuring a unique maximizer; in the weakly-symmetric log-concave case we further identify this maximizer as the common mean/median.

Our analysis focuses on log-concave distributions, a class that plays a fundamental role in probability and analysis, and enjoys strong monotonicity and stability properties (see~\cite{SaumardWellner2014,bagnoli2005logconcave}). In this paper we take a slightly different perspective: we focus on the two-regime case, characterize when the optimal threshold is unique, and identify cases where it is also explicit (namely, the common mean/median) for weakly-symmetric log-concave laws.

\subsection*{Description of the main results}
\begin{itemize}
\item \emph{One dimension (Sections~\ref{S:2}--\ref{S:3}).}
We start from the general approximation problem~\eqref{e:0} and show that, for convex losses, once the regime levels
$\alpha,\beta$ are fixed, the optimal regime set is a half-line (Proposition~\ref{prop:optimal-A-halfline}).
For the quadratic loss this reduces the optimization to thresholds $t\in\R$ and yields the one-parameter criterion
\begin{equation*}
 f_X(t):=\frac{\mathbb{E}[X,\,X\le t]^2}{\mathbb{P}(X\le t)}+\frac{\mathbb{E}[X,\,X>t]^2}{\mathbb{P}(X>t)}
\end{equation*}
(see~\eqref{eq:pbmax1} and~\eqref{e:f:V}). Theorem~\ref{t:1} then identifies conditions ensuring that $f_X$ has a
\emph{unique maximizer} and, in the weakly-symmetric log-concave case, proves that this maximizer is exactly the
common mean/median $t=\mu$.  The treatment here is rather elementary and completely self-contained.   

\item \emph{Higher dimensions (Section~\ref{S:multi}).}
We extend the framework to $X\in\R^d$ and show that, for the quadratic loss, the optimal regime boundary is an affine
hyperplane, so it is natural to optimize over halfspaces (Proposition~\ref{prop:multi-optimal-set} and
\eqref{eq:class-H}). For centered elliptical laws with log-concave one-dimensional projections, the halfspace
optimization reduces to the one-dimensional problem along projections, and the optimal direction is characterized
via a Rayleigh quotient (Theorem~\ref{t:dim:elli}).

\item \emph{Sharpness and counterexamples (Section~\ref{SS:3:3} and Section~\ref{subsec:halfspace-ctrex}).}
We give explicit examples showing that without log-concavity/convexity the function $f_X$ can have multiple global
maximizers, and in dimension $d\ge2$ the halfspace functional need not be maximized at the centered cut even when
$\mathbb{E}[X]=0$.
\end{itemize}

\section{The one dimensional case}\label{S:2}   

In this section we introduce the main concept in the one dimensional case.  

\subsection{Problem setup}\label{SS:2:1}
Let $G:\R\to[0,\infty)$ be a loss function and consider
\begin{equation}\label{e:0}
 f^*=\underset{f\in\mathcal{\widetilde{R}}}{\mathrm{argmin }}\, \mathbb{E}[G(X-f(X))].
\end{equation}
where $\widetilde{R}$ is a class of funtions.  Our main model in our case is the one in which we have $f=\mathbbm{1}_{A}$ for some Borel measurable $A$ on the real line.  

The first result is about the structure of the set $A$ which in general under convexity assumptions on $G$ must be essentially a half line.  

\begin{proposition}\label{prop:optimal-A-halfline}
Assume that $G:\R\to\R$ is convex. Then for any fixed $\alpha,\beta\in\R$, a minimizer of
\[
A\longmapsto \E\bigl[G(X-\alpha)\,\mathbbm{1}_{A}(X)+G(X-\beta)\,\mathbbm{1}_{A^c}(X)\bigr]
\]
over Borel sets $A\subset\R$ is given (up to $\mathbb{P}$-null sets) by
\[
A_{\alpha,\beta}:=\{x\in\R:\ G(x-\alpha)\le G(x-\beta)\}.
\]
Moreover, $A_{\alpha,\beta}$ is an interval (possibly empty or all of $\R$). In particular, unless $\alpha=\beta$,
$A_{\alpha,\beta}$ is a half-line.
\end{proposition}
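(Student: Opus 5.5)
The plan is to split the argument into two independent parts: a pointwise minimization that identifies $A_{\alpha,\beta}$ as optimal, and a monotonicity argument based on convexity that gives the interval structure. Throughout I assume $\E[G(X-\alpha)]$ and $\E[G(X-\beta)]$ are finite. Otherwise the same argument works with values in $[0,\infty]$, provided one is careful with $\infty-\infty$, and I would handle this by working with $\min$ directly, as below.

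\emph{Step 1 (optimality).} For any Borel $A$ and any $x$,
\[
G(x-\alpha)\mathbbm{1}_A(x)+G(x-\beta)\mathbbm{1}_{A^c}(x)\ \ge\ \min\{G(x-\alpha),G(x-\beta)\}.
\]
Equality holds exactly when $x\in A$ implies $G(x-\alpha)\le G(x-\beta)$ and $x\notin A$ implies $G(x-\beta)\le G(x-\alpha)$. Taking $A=A_{\alpha,\beta}$ gives equality everywhere, and $A_{\alpha,\beta}$ is Borel because $G$, being convex, is continuous. Integrating the inequality shows that $A_{\alpha,\beta}$ attains the lower bound $\E[\min\{G(X-\alpha),G(X-\beta)\}]$, so it is a minimizer. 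Conversely, any other minimizer $A$ must attain equality $\mathbb{P}$-a.s. It can therefore differ from $A_{\alpha,\beta}$ only on a $\mathbb{P}$-null set, together with points of the tie set $\{G(x-\alpha)=G(x-\beta)\}$, which do not affect the value. This is the sense of ``up to null sets''.

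\emph{Step 2 (interval structure).} Set $h(x)=G(x-\alpha)-G(x-\beta)$, and assume without loss of generality that $\alpha<\beta$; the case $\alpha>\beta$ is symmetric with the roles exchanged. With $c=\beta-\alpha>0$ and $u=x-\beta$ we have $h(x)=G(u+c)-G(u)$. The key fact is that for convex $G$ the increment $u\mapsto G(u+c)-G(u)$ is nondecreasing, which is the standard slope-monotonicity of convex functions (three-chord lemma). Hence $h$ is nondecreasing and continuous, so $A_{\alpha,\beta}=\{h\le 0\}$ is of the form $(-\infty,t]$ with $t\in[-\infty,\infty]$. It is thus a half-line, possibly empty or all of $\R$. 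If $\alpha=\beta$ then $h\equiv0$ and $A_{\alpha,\beta}=\R$, which is also an interval.

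\emph{Main obstacle.} The delicate point is the final clause ``unless $\alpha=\beta$, $A_{\alpha,\beta}$ is a half-line''. Mere convexity does not exclude the degenerate cases $A_{\alpha,\beta}\in\{\emptyset,\R\}$; a constant $G$, for example, gives $A_{\alpha,\beta}=\R$. I would therefore read ``half-line'' as including these degenerate endpoints $t=\pm\infty$, and remark that one gets a genuine half-line $(-\infty,t]$ with $t\in\R$ as soon as $h$ changes sign. This holds, for instance, if $G$ is strictly convex and coercive, as for $G(x)=x^2$: there $h(x)=(\beta-\alpha)(2x-\alpha-\beta)$ and $t=(\alpha+\beta)/2$. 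In that case strict convexity makes $h$ strictly increasing, so the tie set is a single point, and the minimizer is unique up to $\mathbb{P}$-null sets whenever $X$ has no atom at $t$.
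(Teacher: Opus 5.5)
Your proposal is correct and follows the paper's proof essentially step for step. Both use pointwise minimization of the integrand and then convexity to make $u\mapsto G(u+\delta)-G(u)$ nondecreasing. Your formulation via the lower bound $\min\{G(x-\alpha),G(x-\beta)\}$ also avoids the paper's subtraction of a possibly infinite $\E[G(X-\beta)]$.

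Your caveat on the last clause is justified. The paper's own proof admits the ``empty/all'' cases, and a constant $G$, or $G(x)=e^x$ (which gives $A_{\alpha,\beta}=\emptyset$ for $\alpha<\beta$), shows that ``half-line'' must include these degenerate endpoints unless $h$ changes sign, as it does for coercive $G$ such as $x^2$.
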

\begin{proof}
\emph{Step 1: pointwise minimization.}
Fix $\alpha,\beta$ and write the objective as
\[
\E\bigl[G(X-\beta)\bigr]+\E\bigl[(G(X-\alpha)-G(X-\beta))\,\mathbbm{1}_{A}(X)\bigr].
\]
Thus one minimizes by taking $\mathbbm{1}_{A}(x)=1$ exactly when $G(x-\alpha)-G(x-\beta)\le 0$, i.e. by choosing
$A=A_{\alpha,\beta}$.

\smallskip
\noindent\emph{Step 2: half-line structure under convexity.}
Assume $\alpha<\beta$ and set $\delta:=\beta-\alpha>0$. For $u\in\R$ define
$H(u):=G(u+\delta)-G(u)$. By convexity of $G$, the map $H$ is nondecreasing. Hence
$\{u:\ H(u)\le 0\}$ is an interval of the form $(-\infty,u_0]$ (or empty/all), and translating back shows that
$A_{\alpha,\beta}$ is a half-line. The case $\alpha>\beta$ is analogous, and if $\alpha=\beta$ then every $A$ is
optimal.\qedhere
\end{proof}

This result justifies the fact that we can restrict our optimization to the sets of the form $A=(-\infty,c]$ and thus the class of functions is given by 
\[
 f(x)=\alpha\,\mathbbm{1}_{(-\infty,c]}(x)+\beta\,\mathbbm{1}_{(c,\infty)}(x).
\]

In the rest of the paper we restrict to the quadratic loss $G(x)=x^2$, so the optimization becomes
\begin{equation}\label{eq:main}
    \underset{\alpha,\beta,c\in\mathbb{R}}{\text{argmin }}\mathbb{E}[(X-\alpha\mathds{1}_{(-\infty,c]}(X)-\beta\mathds{1}_{(c,\infty)}(X))^2].
\end{equation}

Before we move on, we will use the following convention. 

\begin{convention} For any event $A$, we write
\[
\E[X,A]:=\E\big[X\,\mathbbm{1}_A\big].
\]
\end{convention}

For fixed $c$ (equivalently, fixed $A$), the optimal values are the conditional means. Indeed, setting the partial
derivatives with respect to $\alpha$ and $\beta$ to zero yields
\begin{equation}\label{eq:alpha}
    \alpha=\frac{\mathbb{E}[X\mathds{1}_{(-\infty,c]}(X)]}{\mathbb{P}(X\le c)}=\mathbb{E}[X\mid X\le c],
\end{equation}
\begin{equation}\label{eq:beta}
    \beta=\frac{\mathbb{E}[X\mathds{1}_{(c,\infty)}(X)]}{\mathbb{P}(X>c)}=\mathbb{E}[X\mid X>c].
\end{equation}

Plugging these back gives the reduced objective
\[
 h(c)=\mathbb{E}[X^2]-\frac{\mathbb{E}[X, X\le c]^2}{\mathbb{P}(X\le c)}-\frac{\mathbb{E}[X, X>c]^2}{\mathbb{P}(X>c)}.
\]
Thus minimizing \eqref{eq:main} is equivalent to maximizing
\begin{equation}\label{eq:pbmax1}
    \underset{c}{\text{argmax}} \left(\frac{\mathbb{E}[X, X\le c]^2}{\mathbb{P}(X\le c)}+\frac{\mathbb{E}[X, X> c]^2}{\mathbb{P}(X>c)} \right).
\end{equation}
\vspace{0.1cm}

\subsection{Continuous and log-concave distributions in one dimension}\label{S:3}



In this section, we treat a very special case of distribution for which the structure of the optimization problem  \eqref{e:0} has unique solutions and in some particular cases is just the mean. The model we treat here is the case of random variables $X$ which have a log-concave density and are weakly-symmetric.  

\begin{definition}
By definition, we call a density $\phi:\R\to\R$ \emph{log-concave} if $\log(\phi(x))$ is concave.  This translates into $\phi(x)=e^{-V(x)}$ where $V:\R\to\R$ is a convex function.  

We say that the density $\phi$ is weakly-symmetric if the mean of $\phi$ is also a median.      
\end{definition}

In other words, if $\mu=\int\limits_{\R} x\phi(x)dx$, then $\int\limits_{(-\infty,\mu]}\phi(x)dx=1/2$.

\subsection{Examples}
If we assume that $\phi(x)=\frac{1}{Z_V}e^{-V(x)}$ with $Z_V=\int e^{-V(x)}dx$, the normalizing constant,  one such example of weakly-symmetric density is the case of symmetric density about $0$, i.e. 
\begin{equation}\label{vfunc}
    V(x)=V(-x),\ \ \forall x\in\mathbb{R}
\end{equation}
In addition, if $V$ is also convex this is an example of weakly and log-concave density.

Particular examples of weakly-symmetric are
\begin{equation*}
    \begin{cases}
        U(a,b): V(x)=\log(b-a) \text{ for }x\in[a,b],\text{ and }\infty, \text{ otherwise }\
        N(0,1): V(x)=\frac{(x-\mu)^2}{2\sigma^2}\
        Exp^2(1): V(x)=|x-\mu|
    \end{cases}
\end{equation*}

The above examples, are cases of symmetry around some real number $\mu$, i.e. they satisfy $V(x+\mu)=V(-x+\mu)$ for all $x\in\R$.  In this case, it is obvious that the median is also the mean.  

However, there are log-concave distributions which are not symmetric.  A typical example is the Weibull distribution with cumulative function given by $k\ge0$ 
\[
F(x)=e^{-x^k} \text{ for } x\ge0
\]
and for a very specific $k$ we can show that the mean can be chosen to be equal to the median.  The point is that the mean of this distribution is $\Gamma(1+1/k)$ and the median is $(\ln 2)^{1/k}$.  One can show that the mean is equal to the variance for a certain value of $k$, because for $k=1$, we have $\Gamma(1+1)=1>\ln(2)$ and for $k\to\infty$, we have $\Gamma(1+1/k)\approx 1-\gamma/k\approx1-0.577/k$, where $\gamma\approx -0.577$ is Euler's constant, and $(\ln(2))^{1/k}\approx 1-\ln(\ln(2))/k\approx 1-0.367/k$ which then means that for large $k$, $\Gamma(1+1/k)<(\ln(2))^{1/k}$.  Numerical approximation gives that $k \approx 3.439$.    

\subsection{Settings and the result.}
We consider the probability measure 
\[
\mu(dx)=\frac{1}{Z_V}e^{-V(x)}, \text{ where } Z_V=\int\limits_{\mathbb{R}} e^{-V(x)} dx.
\] 
and we consider the variable $X$ having the distribution $\mu$. Without loss of generality we assume that $Z_V=1$, so $\int\limits_{-\infty}^{\infty} e^{-V(x)} dx=1 $.

\subsection{Regularity assumptions}\label{SS:3:reg}
In Section~\ref{S:3} we assume that $X$ admits a density of the form $\phi(x)=e^{-V(x)}$ on $\R$ with $V$ convex
(so $\phi$ is log-concave) and $\int_{\R}\phi(x)\,dx=1$. We also assume $\E[X^2]<\infty$ (hence $\E[|X|]<\infty$),
so all quantities in \eqref{e:f:V} are finite.

In the proofs below we differentiate functions of the form $t\mapsto \int_{-\infty}^t h(x)\phi(x)\,dx$ and
$t\mapsto \int_t^{\infty} h(x)\phi(x)\,dx$. Since a log-concave density is locally bounded and locally integrable on the
interior of its support, these functions are absolutely continuous and their derivatives are given by $h(t)\phi(t)$ for all
$t$ in the interior of the support. When $\p(X\le t)\in\{0,1\}$, the expression \eqref{e:f:V} is interpreted by continuity and
such $t$ are irrelevant for the maximization problem.

The main result is the following.

\begin{theorem}\label{t:1}
For a random variable $X$ of mean $\mu$, set \begin{equation}\label{e:f:V}
f_{X}(t)=\frac{\mathbb{E}[X, X \le t]^{2}}{\mathbb{P}(X \le  t)}+\frac{\mathbb{E}[X, X > t]^{2}}{\mathbb{P}(X > t)}.  
\end{equation} 

\begin{enumerate}
\item If $X$ has a log-concave distribution, then $f_X$ has a unique maximizer.  

\item If $\mu$ is a local maximum for $f_X$, then $X$ must be weakly-symmetric.   

\item Assume that $X$ is a weakly-symmetric and log-concave density with mean $\mu$.  Then $f_X$ is decreasing on the interval $[\mu,\infty)$ and increasing on $(-\infty,\mu]$.  

In particular, $f(t)<f(\mu)$ for $t\ne \mu$, thus the unique optimizer of \eqref{eq:main} is $c=\mu$ and the minima obtained is 
\[
\beta(-\mathbbm{1}_{(-\infty,\mu]}(X)+\mathbbm{1}_{(\mu,\infty)}(X)) \text{ with }\beta=\E[|X-\mu|]/2.
\]
\end{enumerate}
\end{theorem}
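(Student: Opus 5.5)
The approach is to compute $f_X'$ explicitly and read off its sign. Write $F(t)=\p(X\le t)$, $m(t)=\E[X,X\le t]$, so $\E[X,X>t]=\mu-m(t)$. Set $a(t)=\E[X\mid X\le t]=m/F$ and $b(t)=\E[X\mid X>t]=(\mu-m)/(1-F)$. By the regularity conventions of Subsection~\ref{SS:3:reg}, $m'=t\phi$ and $F'=\phi$ on the interior of the support. A direct computation then gives
\[
f_X'(t)=\phi(t)\bigl(2ta-a^2-2tb+b^2\bigr)=\phi(t)\,(b-a)\,(a+b-2t).
\]
Since $b(t)>a(t)$ for $t$ in the interior of the support, the sign of $f_X'$ equals the sign of $g(t):=\tfrac{a(t)+b(t)}{2}-t$. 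The critical points are therefore exactly the fixed points of the Lloyd map $t\mapsto (a+b)/2$. All three items reduce to studying $g$.

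For item (1), I would show that $g$ is strictly decreasing on the interior $(\ell,r)$ of the support. I would also check the boundary behaviour. As $t\downarrow\ell$ we have $a\to\ell$ and $b\to\mu$, so $g\to(\mu-\ell)/2>0$ (or $+\infty$ if $\ell=-\infty$, using $a\ge t$ bounded below suitably). Symmetrically, $g<0$ near $r$. Strict monotonicity comes from $a'(t)\le1$ and $b'(t)\le1$, with strict inequality for at least one of them at every $t$. Indeed $a'(t)=\phi(t)(t-a(t))/F(t)$, and integrating by parts gives $t-a(t)=\int_{-\infty}^tF(s)\,ds/F(t)$. Hence $a'\le 1$ is equivalent to $\Psi''\Psi\le(\Psi')^2$ for $\Psi(t)=\int_{-\infty}^tF$, i.e.\ to log-concavity of $\Psi$. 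This follows because $\phi$ log-concave implies $F$ log-concave, which implies $\Psi$ log-concave (Prékopa, or an elementary one-dimensional argument). The same argument applied to $\int_t^\infty(1-F)$ gives $b'\le1$.

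The main obstacle is strictness. Equality $a'=1$ on an interval forces $\Psi$, hence $F$, to be exactly exponential on a left half-line, i.e.\ $V$ affine there. Likewise, $b'=1$ forces $V$ affine on a right half-line. Both cannot hold at the same $t$: that would make $V$ affine on all of $\R$ with one slope, contradicting integrability. This is the point where the Laplace-type example must be checked carefully. Granting this, $g$ has a unique zero $t^*$ with $f_X'>0$ on $(\ell,t^*)$ and $f_X'<0$ on $(t^*,r)$, so $t^*$ is the unique maximizer.

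For item (2), a local maximum at the interior point $\mu$ gives $f_X'(\mu)=0$, hence $a(\mu)+b(\mu)=2\mu$. Combining this with the identity $\mu=F(\mu)a(\mu)+(1-F(\mu))b(\mu)$ yields $(F(\mu)-\tfrac12)(a-b)=0$. Since $a<b$, we get $F(\mu)=1/2$, i.e.\ weak symmetry.

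For item (3), weak symmetry gives $F(\mu)=1/2$, so the same identity read backwards gives $g(\mu)=0$. By item (1), $\mu$ is the unique zero of $g$, so $f_X$ increases on $(-\infty,\mu]$ and decreases on $[\mu,\infty)$. The optimal levels are the conditional means from \eqref{eq:alpha}–\eqref{eq:beta}. Using $\E[X-\mu,X>\mu]=-\E[X-\mu,X\le\mu]=\tfrac12\E|X-\mu|$ and $\p(X\le\mu)=1/2$, they equal $\mu\mp\E|X-\mu|$. This gives the stated step function; the constant has to be matched with the normalization in the statement, which I would reconcile at the end.
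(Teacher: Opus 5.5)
Your route is essentially the paper's. Your $g=\tfrac{a+b}{2}-t$ is $\tfrac12\bigl(m(t)-k(t)\bigr)$ in the paper's notation, and $b'\le 1$, $a'\le 1$ are exactly its Steps 1 and 2 ($m$ nonincreasing, $k$ nondecreasing). You derive these from log-concavity of $\int_{-\infty}^t F$ and $\int_t^\infty(1-F)$ (Pr\'ekopa). Lemma~\ref{lem2.1.2} is a self-contained proof of precisely that log-concavity. Your direct factorization $f_X'=\phi\,(b-a)(a+b-2t)$, with no centering and no logarithm, is a bit cleaner, and items (2) and (3) match the paper.

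\textbf{The gap is in the strictness step, and your worry about the Laplace law is justified.} You claim that at every $t$ at least one of $a'(t)\le 1$, $b'(t)\le 1$ is strict. This fails for $\phi(x)=\tfrac12 e^{-|x|}$ at $t=0$. There $F(0)=\phi(0)=\tfrac12$, $a(0)=-1$, $b(0)=1$, so $a'(0)=\phi(0)\bigl(0-a(0)\bigr)/F(0)=1$, likewise $b'(0)=1$, and $g'(0)=0$. Pointwise, $V$ can be affine on $(-\infty,t]$ and on $[t,\infty)$ with different slopes, so ``both cannot hold at the same $t$'' is false.

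What is true, and sufficient, is that $g'$ cannot vanish on a whole interval $I$:
\begin{itemize}
\item If $a'\equiv 1$ on $I$, then $\log\Psi$ is affine on $I$. So $\Psi=Ce^{\lambda t}$ with $\lambda=F/\Psi>0$, and $\phi=\Psi''$ is strictly increasing on $I$.
\item If $b'\equiv 1$ on $I$, the same reasoning applied to $\bar{\Psi}(t)=\int_t^\infty(1-F)$ gives $\bar{\Psi}=De^{-\nu t}$ with $\nu>0$. Then $\phi=\bar{\Psi}''$ is strictly decreasing on $I$, a contradiction.
\end{itemize}
With $g'\le 0$ this gives strict decrease of $g$. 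You do not need the half-line extension, which the ODE on $I$ would not give anyway. The paper simply asserts that $m-k$ is strictly decreasing, so it needs the same repair.

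\textbf{Two smaller points.}
\begin{itemize}
\item As $t\to-\infty$ you have $a\le t$, not $a\ge t$. The right reason that $g\to+\infty$ is that $t-a(t)$ is nondecreasing (from $a'\le 1$), hence bounded as $t\to-\infty$, while $b(t)-t\to+\infty$. The paper's claim $k(t)\to 0$ is false for Laplace tails, where $k\equiv 1$ on $(-\infty,0]$, but only boundedness matters.
\item In item (3) there is nothing to reconcile. Your levels $\mu\mp\E|X-\mu|$ are correct: for $X\sim U(-1,1)$, $\E[X\mid X>0]=\tfrac12=\E|X|$. So the statement's $\beta=\E|X-\mu|/2$, with no shift by $\mu$, is an error in the statement itself, and the paper's proof never checks it.
\end{itemize}
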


\subsection{Some integral inequalities for convex functions}\label{SS:3:0}\hfill\

Before we provide the proof of Theorem~\ref{t:1} we introduce the key result here which is of independent interest.    

\begin{lemma}\label{lem2.1.2}
Let $V:[0,\infty)\to \mathbb{R}$ be convex and assume
\begin{equation}\label{eq:assum-int}
\int_0^\infty e^{-V(y)}\,dy<\infty,
\qquad
\int_0^\infty y\,e^{-V(y)}\,dy<\infty.
\end{equation}
Then
\begin{equation}\label{eq:goal8}
e^{-V(0)}\int_0^{\infty}y e^{-V(y)}\,dy\leq\left(\int_0^{\infty}e^{-V(y)}\,dy\right)^2.
\end{equation}
Equality is attained for $V(y)=V(0)+\lambda y$ with $\lambda>0$.  
\end{lemma}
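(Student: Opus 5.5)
The plan is to compare $e^{-V}$ with an exponential density that has the same total mass and the same value at $0$, and to use a single-crossing argument.

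First I normalize. Both sides of \eqref{eq:goal8} scale by $e^{-2V(0)}$ when $V$ is replaced by $V-V(0)$, so I may assume $V(0)=0$. Set $\phi=e^{-V}$, so $\phi(0)=1$, and put $m:=\int_0^\infty\phi(y)\,dy$. By \eqref{eq:assum-int}, $m$ is finite, and $m>0$ because $V$ is real-valued. The goal becomes $\int_0^\infty y\phi(y)\,dy\le m^2$.

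Next I introduce the comparison function $\psi(y)=e^{-\lambda y}$ with $\lambda:=1/m$. Then $\psi(0)=1=\phi(0)$ and $\int_0^\infty\psi=m=\int_0^\infty\phi$. A direct computation gives $\int_0^\infty y\psi(y)\,dy=1/\lambda^2=m^2$. It therefore suffices to show
\[
\int_0^\infty y\bigl(\phi(y)-\psi(y)\bigr)\,dy\le 0 .
\]

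The key step is the single-crossing property. The function $g(y):=\log\phi(y)-\log\psi(y)=-V(y)+\lambda y$ is concave on $[0,\infty)$ and satisfies $g(0)=0$. Hence the superlevel set $\{y\ge0:\ g(y)\ge 0\}$ is a convex set containing $0$, i.e. an interval $[0,y_0]$ or $[0,y_0)$ with $y_0\in[0,\infty]$. So $\phi\ge\psi$ on $[0,y_0)$ and $\phi\le\psi$ on $(y_0,\infty)$.

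Now consider the two cases for $y_0$. If $y_0=\infty$, then $\phi\ge\psi$ everywhere, and since the two integrals are equal we get $\phi=\psi$ a.e., so equality holds. If $y_0<\infty$, then the integrand $(y-y_0)(\phi-\psi)$ is $\le 0$ everywhere. Using $\int_0^\infty(\phi-\psi)=0$, we obtain
\[
\int_0^\infty y(\phi-\psi)\,dy=\int_0^\infty (y-y_0)(\phi-\psi)\,dy\le 0 ,
\]
which proves \eqref{eq:goal8}. All integrals are finite by \eqref{eq:assum-int} and the integrability of $\psi$, so splitting the integral this way is legitimate.

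For the equality statement, the case $V(y)=V(0)+\lambda y$ with $\lambda>0$ is a direct computation: $e^{-V(0)}\cdot e^{-V(0)}/\lambda^2=(e^{-V(0)}/\lambda)^2$. Conversely, equality forces $(y-y_0)(\phi-\psi)=0$ a.e., so $\phi=\psi$; I will mention this only as a remark.

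The main obstacle is the single-crossing step. One has to check that concavity of $g$ with $g(0)=0$ really yields a single sign change, and that degenerate cases cause no trouble: for instance $y_0=0$, and the possibility of $V$ becoming very large. The rest is routine bookkeeping.
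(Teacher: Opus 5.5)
Your proof is correct and complete, but it takes a genuinely different route from the paper's.

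The paper works through derivatives. Integrating by parts, it shows $\int_0^\infty V'_+e^{-V}\,dy=e^{-V(0)}$ and $\int_0^\infty e^{-V}\,dy=\int_0^\infty yV'_+(y)e^{-V(y)}\,dy$, after checking that $w(R)\to0$ and $Rw(R)\to0$. Substituting these into \eqref{eq:goal8} turns it into a Chebyshev-type correlation inequality between $y$ and the nondecreasing function $V'_+$ under the weight $e^{-V}$. The paper then concludes by symmetrization, writing the deficit as $\frac12\iint (y_1-y_2)\bigl(V'_+(y_1)-V'_+(y_2)\bigr)e^{-V(y_1)}e^{-V(y_2)}\,dy_1\,dy_2\ge0$.

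You instead compare $e^{-V}$ directly with the extremal exponential, matched in value at $0$ and in total mass. Concavity of the log-ratio $-V(y)+\lambda y$, which vanishes at $0$, gives a single sign change. You finish with the standard device of subtracting $y_0$ times the zero-mass difference $\phi-\psi$.

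What your route buys:
\begin{itemize}
\item It is derivative-free and needs no boundary-term analysis or absolute continuity.
\item It covers verbatim the case where $V$ jumps up at the endpoint, which a convex function on $[0,\infty)$ may do. In that case the paper's asserted absolute continuity on $[0,R]$ fails, and its first identity actually holds with $e^{-V(0^+)}$ in place of $e^{-V(0)}$. This is harmless, since $V(0)\ge V(0^+)$, but it is a step the paper glosses over.
\item It makes transparent why exponentials are the extremizers.
\end{itemize}

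What the paper's route buys: an explicit nonnegative formula for the gap. That formula makes the rigidity statement immediate (equality only when $V'_+$ is a.e.\ constant) and places the lemma in the familiar Chebyshev/association framework.
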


\begin{proof}
Set, for convenience,
\[
w(y):=e^{-V(y)},\qquad y\ge 0.
\]
Since $V$ is convex, it is locally Lipschitz on $(0,\infty)$ and hence absolutely continuous on every interval $[0,R]$.
In particular, the right derivative
\[
v(y):=V'_+(y)
\]
exists for every $y\ge 0$, is nondecreasing, locally integrable, and satisfies
\[
V(y)=V(0)+\int_0^y v(s)\,ds,\qquad y\ge 0.
\]
Consequently $w$ is absolutely continuous on each $[0,R]$, and for a.e.\ $y$,
\[
w'(y)=-v(y)\,w(y).
\]

\medskip
\noindent\emph{Boundary terms needed for integration by parts.}
From $\int_0^\infty w<\infty$ we have $V(y)\to\infty$ as $y\to\infty$ (otherwise $w$ would not be integrable),
so $w(y)\to 0$. Moreover, convexity implies that $v$ is nondecreasing; hence there exists $R_0$ such that
$v(y)\ge 0$ for all $y\ge R_0$, so $V$ is nondecreasing on $[R_0,\infty)$ and thus $w$ is nonincreasing there.
For $R\ge 2R_0$ we then have
\[
\int_{R/2}^{R} w(y)\,dy \ge \frac{R}{2}\,w(R),
\qquad\text{hence}\qquad
R\, w(R)\le 2\int_{R/2}^{\infty} w(y)\,dy \xrightarrow[R\to\infty]{}0.
\]
This justifies the vanishing of the boundary term $R\,w(R)$ in the integration by parts identity below.

\medskip
\noindent\emph{Two integration-by-parts identities.}
For any $R>0$, using $w'=-v w$ a.e.\ and absolute continuity,
\[
\int_0^R v(y)w(y)\,dy=\int_0^R -w'(y)\,dy=w(0)-w(R).
\]
Letting $R\to\infty$ and using $w(R)\to 0$ gives
\begin{equation}\label{eq:ibp1}
\int_0^\infty V'_+(y)\,e^{-V(y)}\,dy = e^{-V(0)}.
\end{equation}
Similarly,
\[
\int_0^R w(y)\,dy
=\Big[y w(y)\Big]_{0}^{R}-\int_0^R y w'(y)\,dy
=R\, w(R)+\int_0^R y v(y) w(y)\,dy.
\]
Letting $R\to\infty$ and using $R\,w(R)\to 0$ yields
\begin{equation}\label{eq:ibp2}
\int_0^\infty e^{-V(y)}\,dy=\int_0^\infty y\,V'_+(y)\,e^{-V(y)}\,dy.
\end{equation}

\medskip
\noindent\emph{Proof of \eqref{eq:goal8}.}
Denote
\[
I:=\int_0^\infty e^{-V(y)}\,dy,\qquad 
J:=\int_0^\infty y\,e^{-V(y)}\,dy.
\]
Using \eqref{eq:ibp1} and \eqref{eq:ibp2}, \eqref{eq:goal8} is equivalent to
\[
\Big(\int_0^\infty y\,e^{-V(y)}\,dy\Big)\Big(\int_0^\infty V'_+(y)\,e^{-V(y)}\,dy\Big)
\le
\Big(\int_0^\infty e^{-V(y)}\,dy\Big)\Big(\int_0^\infty y\,V'_+(y)\,e^{-V(y)}\,dy\Big).
\]
Expand both sides as double integrals and symmetrize:
\begin{align*}
&\Big(\int_0^\infty e^{-V}\Big)\Big(\int_0^\infty y\,V'_+(y)\,e^{-V(y)}\,dy\Big)
-\Big(\int_0^\infty y\,e^{-V}\Big)\Big(\int_0^\infty V'_+(y)\,e^{-V(y)}\,dy\Big) \
&\hspace{2cm}
=\frac12\int_0^\infty\!\!\int_0^\infty (y_1-y_2)\bigl(V'_+(y_1)-V'_+(y_2)\bigr)\,
e^{-V(y_1)}e^{-V(y_2)}\,dy_1dy_2.
\end{align*}
Since $V'_+$ is nondecreasing, $(y_1-y_2)(V'_+(y_1)-V'_+(y_2))\ge 0$ for all $y_1,y_2$, hence
\eqref{eq:goal8} follows. Equality in \eqref{eq:goal8} forces $V'_+$ to be a.e.\ constant, i.e.\ $V$ affine:
$V(y)=V(0)+\lambda y$; integrability implies $\lambda>0$.\qedhere

\end{proof}

\subsection{The proof of Theorem~\ref{t:1}}

\begin{proof}
We structure the proof according to the three statements in the theorem.

We first reduce the proof to the case $\mu=0$.  
Let $Y:=X-\mu$. A direct check shows that for all $t\in\mathbb{R}$,
\[
 f_X(t)=f_Y(t-\mu)+\mu^2.
\]
In particular,
\[
\underset{t\in\mathbb{R}}{\arg\max} f_X(t)=\mu+\underset{s\in\mathbb{R}}{\arg\max} f_Y(s).
\]
Therefore it is enough to prove the theorem for centered variables. In the rest of the proof we assume
\begin{equation}\label{eq:centered-assumption}
\E[X]=0.
\end{equation}
and then we can write 
\begin{equation}\label{e:m0f}
    f_X(t)= \frac{\E[X,X\le t]^2}{\p(X\le t)\p(X>t).}
\end{equation}

\medskip
\begin{enumerate}
\item \emph{(Uniqueness of the maximizer under convexity of $V$.)}
Assume that $V$ is convex (equivalently, $X$ has a log-concave density $e^{-V}$) and $\E[X]=0$.
Write $f_X(t)$ in the centered form \eqref{e:m0f} and set $g(t):=\log f_X(t)$.
Differentiating gives, for all $t$ in the interior of the support,
\begin{equation}\label{e:gs}
\begin{split}
 g'(t)&=e^{-V(t)}\Big(-\frac{2t}{\int_t^\infty x e^{-V(x)}\,dx}+\frac{1}{\int_t^\infty e^{-V(x)}\,dx}-\frac{1}{\int_{-\infty}^t e^{-V(x)}\,dx}\Big)\\
 &=\frac{e^{-V(t)}}{\int_t^\infty x e^{-V(x)}\,dx}\Big(-2t+\frac{\int_t^\infty x e^{-V(x)}\,dx}{\int_t^\infty e^{-V(x)}\,dx}-\frac{\int_t^\infty x e^{-V(x)}\,dx}{\int_{-\infty}^t e^{-V(x)}\,dx}\Big).
 \end{split}
\end{equation}
We arrange this expression into  
\begin{equation}\label{e:ga}
 g'(t)=\frac{e^{-V(t)}}{\int_t^\infty x e^{-V(x)}\,dx}\left(
 \frac{\int_t^\infty (x-t) e^{-V(x)}\,dx}{\int_t^\infty e^{-V(x)}\,dx}
 -
 \frac{\int_{-\infty}^t (t-x) e^{-V(x)}\,dx}{\int_{-\infty}^t e^{-V(x)}\,dx}
 \right).
\end{equation}
Indeed, using $\E[X]=0$, we have $\int_{-\infty}^t x e^{-V(x)}\,dx=-\int_t^{\infty} x e^{-V(x)}\,dx$, hence
\[
\frac{\int_t^{\infty} x e^{-V(x)}\,dx}{\int_t^{\infty} e^{-V(x)}\,dx}
-
\frac{\int_{-\infty}^{t} x e^{-V(x)}\,dx}{\int_{-\infty}^{t} e^{-V(x)}\,dx}
-2t
=\frac{\int_t^{\infty} (x-t)e^{-V(x)}\,dx}{\int_t^{\infty} e^{-V(x)}\,dx}
-
\frac{\int_{-\infty}^{t} (t-x)e^{-V(x)}\,dx}{\int_{-\infty}^{t} e^{-V(x)}\,dx}
\]
which proves \eqref{e:ga}.  

Next define
\[
 m(t):=\frac{\int_t^{\infty} (x-t)e^{-V(x)}\,dx}{\int_t^{\infty} e^{-V(x)}\,dx},
 \qquad
 k(t):=\frac{\int_{-\infty}^{t} (t-x)e^{-V(x)}\,dx}{\int_{-\infty}^{t} e^{-V(x)}\,dx}.
\]
Thus 
\[
g'(t)=\frac{e^{-V(t)}}{\int_t^\infty x e^{-V(x)}\,dx}\left(
 m(t)-k(t) \right).  
\]
In particular, $g'(t)=0$ if and only if $m(t)=k(t)$. Moreover, since $\E[X]=0$ we have
\[
\int_t^\infty x e^{-V(x)}\,dx=-\int_{-\infty}^t x e^{-V(x)}\,dx,
\]
so the denominator is strictly positive for every $t$ in the interior of the support (for $t>0$ the first integral
is positive, while for $t<0$ the second is positive).

Consequently the sign of $g'$ is determined by $m(t)-k(t)$.  Next we analyze separately these functions.  

\smallskip
\noindent\emph{Step 1: $m$ is decreasing.}
A direct computation shows that 
\[
m'(t)=\frac{e^{-V(t)}\int_t^{\infty}(x-t)e^{-V(x)}dx - \left(\int_t^{\infty}e^{-V(x)}dx\right)^2}{\left(\int_t^{\infty}e^{-V(x)}dx\right)^2}
\]

Set $V_t(u):=V(t+u)-V(t)$, which is convex on $[0,\infty)$. Applying \eqref{eq:goal8} from
Lemma~\ref{lem2.1.2} to $V_t$ gives
\[
\int_0^{\infty} u e^{-V_t(u)}\,du\le\left(\int_0^{\infty} e^{-V_t(u)}\,du\right)^2.
\]
Rewriting back in the original variables yields
\[
 e^{-V(t)}\int_t^{\infty} (x-t)e^{-V(x)}\,dx\le\left(\int_t^{\infty} e^{-V(x)}\,dx\right)^2,
\]
which is exactly $m'(t)\le 0$, so $m$ is decreasing.

\smallskip
\noindent\emph{Step 2: $k$ is increasing.}  We can work out the same argument as for Step~1.  Alternatively, 
apply Step~1 to the log-concave random variable $-X$ (whose potential is $y\mapsto V(-y)$, still convex).
The corresponding $m$ function for $-X$ at level $-t$ is exactly $k(t)$, hence $k$ is increasing.

\smallskip
\noindent\emph{Step 3: uniqueness of the zero of $g'$.}
Since $m$ is decreasing and $k$ is increasing, $m(t)-k(t)$ is strictly decreasing.
Moreover,
\[
\lim_{t\to-\infty} m(t)=+\infty,\quad \lim_{t\to-\infty} k(t)=0,
\qquad
\lim_{t\to+\infty} m(t)=0,\quad \lim_{t\to+\infty} k(t)=+\infty,
\]
so $m(t)-k(t)$ crosses $0$ exactly once. Therefore $g'(t)$ has a unique zero, hence $g$ has a unique critical
point; since $f_X(t)\to 0$ as $t\to\pm\infty$, this critical point is the unique maximizer of $f_X$.

\item \emph{(The mean is a maximizer $\Rightarrow$ weak symmetry.)}
Assume $0$ is a maximizer of $f_X$. Then it is a critical point of $g(t)=\log f_X(t)$, hence $g'(0)=0$.  Using \eqref{e:gs} we get the conclusion that 
\[
\int_{0}^{\infty} e^{-V(x)}\,dx=\int_{-\infty}^{0} e^{-V(x)}\,dx.
\]
That is, $0$ is a median. Since $\E[X]=0$, this is exactly the weak symmetry condition.

\item \emph{(Monotonicity under weak symmetry and log-concavity.)}
Assume now that $X$ is weakly symmetric and log-concave with mean $0$. Then $0$ is a median, i.e.
\begin{equation}\label{eq:median0}
\int_{-\infty}^0 e^{-V(x)}\,dx=\int_0^{\infty} e^{-V(x)}\,dx.
\end{equation}
which combined with the expresion of \eqref{e:gs} shows that $g'(0)=0$ and also $m(0)=k(0)$.  Since we just proved that $g'$ has the same sign as $m(t)-k(t)$ which is decreasing, we get that $g'(t)<0$ for $t>0$ and $g'(t)>0$ for $t<0$ which definitely show that $g$ attains the maximum at $t=0$.  
\end{enumerate}
\qedhere
\end{proof}

\section{The multidimensional case}\label{S:multi}

In this section we extend the one–dimensional framework of Section~\ref{S:2} to random vectors
$X\in\mathbb{R}^d$.  This amounts to constructing risk strategies in the case of portfolios of risky assets.  
The guiding idea is the same: instead of approximating $X$ by a single constant
(as in classical risk measures), we allow two \emph{regimes} determined intrinsically by a measurable
set $A\subset\mathbb{R}^d$, and we approximate $X$ by two (vector) constants on $A$ and on $A^c$.

\subsection{Problem definition and classes of regimes}\label{SS:multi:prob}

Let $G:\mathbb{R}^d\to[0,\infty)$ be a given loss function and let $\widetilde{\mathcal{R}}$ be a class of
measurable functions $f:\mathbb{R}^d\to\mathbb{R}^d$. We consider the optimization problem
\begin{equation}\label{eq:multi:general}
f^*=\underset{f\in\widetilde{\mathcal{R}}}{\mathrm{argmin}}\; \mathbb{E}\bigl[\,G(X-f(X))\,\bigr].
\end{equation}
In the classical definition of risk measures, $\widetilde{\mathcal{R}}$ would be the set of constant
functions. Here we focus on a two–regime class in which $f$ takes only two vector values depending on
membership in a set $A$.

\medskip
\noindent\textbf{Standing integrability assumptions.}
For the quadratic loss $G(x)=\|x\|^2$ considered below we assume $\E\|X\|^2<\infty$ (hence $\E\|X\|<\infty$),
so that all conditional means in \eqref{eq:multi:alpha} exist and the objectives in \eqref{eq:multi:hA}--\eqref{eq:multi:pbmax}
are finite. When writing ratios such as $\frac{\|\E[X\mathbf 1_A]\|^2}{\p(A)}$ we implicitly restrict to sets with
$0<\p(A)<1$; the cases $\p(A)\in\{0,1\}$ are trivial and can be ignored in the maximization problem.

\medskip
\noindent\textbf{Standing integrability assumptions.}
For the quadratic loss $G(x)=\|x\|^2$ considered below we assume $\E\|X\|^2<\infty$ (hence $\E\|X\|<\infty$),
so that all conditional means in \eqref{eq:multi:alpha} exist and the objectives in \eqref{eq:multi:hA}--\eqref{eq:multi:pbmax}
are finite. When writing ratios such as $\frac{\|\E[X\mathbf 1_A]\|^2}{\p(A)}$ we implicitly restrict to sets with
$0<\p(A)<1$; the cases $\p(A)\in\{0,1\}$ are trivial and can be ignored in the maximization problem.

\medskip
\noindent\textbf{Two–regime class.}
For a family of measurable sets $\mathcal{A}\subset\mathcal{B}(\mathbb{R}^d)$ define
\begin{equation}\label{eq:Rgen-d}
\mathcal{R}^{(d)}_{\mathcal{A}}
=\Bigl\{\, f:\mathbb{R}^d\to\mathbb{R}^d:\ 
f(x)=\alpha\,\mathbf{1}_{A}(x)+\beta\,\mathbf{1}_{A^c}(x),\ \ A\in\mathcal{A},\ \alpha,\beta\in\mathbb{R}^d
\Bigr\}.
\end{equation}

\begin{proposition}\label{prop:multi-optimal-set}
Assume that $G:\mathbb{R}^d\to\mathbb{R}$ is convex. Fix $\alpha,\beta\in\mathbb{R}^d$.
Then a minimizer of
\[
A\longmapsto \E\bigl[G(X-\alpha)\,\mathbf{1}_{A}(X)+G(X-\beta)\,\mathbf{1}_{A^c}(X)\bigr]
\]
over Borel sets $A\subset\mathbb{R}^d$ is given (up to $\mathbb{P}$-null sets) by
\[
A_{\alpha,\beta}:=\{x\in\mathbb{R}^d:\ G(x-\alpha)\le G(x-\beta)\}.
\]
Moreover, setting $\delta:=\beta-\alpha$, the set $A_{\alpha,\beta}$ has the following directional one-crossing
property: for every $z\in\mathbb{R}^d$, the intersection of $A_{\alpha,\beta}$ with the affine line
$z+\mathbb{R}\delta$ is either empty, all of $z+\mathbb{R}\delta$, or a half-line in that line.

In particular, for the quadratic loss $G(x)=\|x\|^2$ one has
\[
A_{\alpha,\beta}=\Big\{x\in\mathbb{R}^d:\ \langle x,\beta-\alpha\rangle\le \tfrac12\bigl(\|\beta\|^2-\|\alpha\|^2\bigr)\Big\},
\]
which is an affine halfspace.
\end{proposition}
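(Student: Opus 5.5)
The proof follows the one-dimensional Proposition~\ref{prop:optimal-A-halfline} step by step. It has three parts: a pointwise minimization, a restriction to lines to obtain the one-crossing property, and a direct computation for the quadratic loss.

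\emph{Step 1 (pointwise minimization).} For fixed $\alpha,\beta\in\mathbb{R}^d$ we write the objective as
\[
\E[G(X-\beta)]+\E\bigl[(G(X-\alpha)-G(X-\beta))\mathbf 1_A(X)\bigr].
\]
The first term does not depend on $A$. The second term is minimized by including $x$ in $A$ exactly when the integrand $G(x-\alpha)-G(x-\beta)$ is $\le 0$. This gives $A_{\alpha,\beta}$. Any other set that differs from it only on $\{G(x-\alpha)=G(x-\beta)\}$ or on a $\mathbb{P}$-null set is also optimal. Uniqueness up to null sets is not claimed, so this is enough. Finiteness of the expectations is part of the standing assumptions; if needed, one can note that $G$ convex and finite implies $G$ is locally bounded.

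\emph{Step 2 (directional one-crossing).} If $\delta=0$, then $A_{\alpha,\beta}=\mathbb{R}^d$ and there is nothing to prove. Otherwise fix $z\in\mathbb{R}^d$ and parametrize the line by $x=z+s\delta$. Define $\psi(s):=G(z+s\delta-\alpha)$. Then
\[
G(x-\beta)=G(z+s\delta-\alpha-\delta)=\psi(s-1),
\]
so $x\in A_{\alpha,\beta}$ if and only if $H(s):=\psi(s)-\psi(s-1)\le 0$. Since $\psi$ is a convex function of one variable (a convex function composed with an affine map), its increments over intervals of fixed length $1$ are nondecreasing. Hence $H$ is nondecreasing in $s$, and $\{s: H(s)\le 0\}$ is empty, all of $\mathbb{R}$, or a half-line $(-\infty,s_0]$ or $(-\infty,s_0)$. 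This is exactly the claim. The monotonicity of increments is the standard slope inequality for convex functions, the same argument used in Step 2 of Proposition~\ref{prop:optimal-A-halfline}.

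\emph{Step 3 (quadratic case).} For $G(x)=\|x\|^2$ we expand
\[
\|x-\alpha\|^2\le\|x-\beta\|^2 .
\]
The $\|x\|^2$ terms cancel, and the inequality becomes
\[
-2\langle x,\alpha\rangle+\|\alpha\|^2\le -2\langle x,\beta\rangle+\|\beta\|^2,
\qquad\text{i.e.}\qquad
\langle x,\beta-\alpha\rangle\le\tfrac12(\|\beta\|^2-\|\alpha\|^2).
\]
When $\alpha\ne\beta$ this set is an affine halfspace. When $\alpha=\beta$ it is all of $\mathbb{R}^d$.

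No step here is a real obstacle. The only point needing care is Step 2: stating the monotone-increment property of convex functions correctly, and handling the degenerate case $\delta=0$ and the possibly open or closed endpoint of the half-line.
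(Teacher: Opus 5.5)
Your proposal is correct and follows the paper's proof. It uses the same pointwise minimization to identify $A_{\alpha,\beta}$ and the same expansion of $\|x-\alpha\|^2\le\|x-\beta\|^2$ for the quadratic case. Your Step 2 (restricting $G$ to the line $z+\mathbb{R}\delta$ and using that $s\mapsto\psi(s)-\psi(s-1)$ is nondecreasing) supplies the proof of the directional one-crossing property, which the paper's proof only gestures at by recalling the one-dimensional monotonicity of $u\mapsto G(u+\delta)-G(u)$. Your remark that $\alpha=\beta$ gives all of $\mathbb{R}^d$ rather than a halfspace is also accurate.
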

\begin{proof}
\emph{Pointwise minimization (dimension-free).}
Fix $\alpha,\beta\in\mathbb{R}^d$ and minimize over Borel sets $A\subset\mathbb{R}^d$. Write
\[
\E\bigl[G(X-\alpha)\,\mathbf{1}_{A}(X)+G(X-\beta)\,\mathbf{1}_{A^c}(X)\bigr]
=\E\bigl[G(X-\beta)\bigr]+\E\bigl[(G(X-\alpha)-G(X-\beta))\,\mathbf{1}_{A}(X)\bigr].
\]
Thus, exactly as in one dimension, we minimize by choosing $\mathbf{1}_{A}(x)=1$ whenever
$G(x-\alpha)-G(x-\beta)\le 0$, i.e. by taking $A=A_{\alpha,\beta}$.

\medskip
\noindent\emph{Halfspace structure for $G(x)=\|x\|^2$.}
In one dimension, convexity of $G$ implies a monotonicity property of
$u\mapsto G(u+\delta)-G(u)$, which forces $A_{\alpha,\beta}$ to be a half-line.
In dimension $d\ge 2$ there is no analogous total order, so for a general convex $G$ the set
$A_{\alpha,\beta}=\{G(\cdot-\alpha)\le G(\cdot-\beta)\}$ need not be a halfspace.

For the quadratic loss, however,
\[
\|x-\alpha\|^2\le \|x-\beta\|^2
\quad\Longleftrightarrow\quad
2\langle x,\beta-\alpha\rangle\le \|\beta\|^2-\|\alpha\|^2,
\]
which is precisely the affine halfspace stated in the proposition.\qedhere
\end{proof}

\begin{example}\label{ex:convex-G-not-halfspace}
In general, for a convex $G$ the set $A_{\alpha,\beta}$ need not be a halfspace.
For instance, in $\mathbb{R}^2$ let
\[
G(x,y)=(x-y)^2+x^4,
\qquad
\alpha=(0,0),\ \beta=(1,0).
\]
Then $A_{\alpha,\beta}=\{(x,y):\ G(x,y)\le G(x-1,y)\}$ is given by
\[
(x-y)^2+x^4\le (x-1-y)^2+(x-1)^4
\quad\Longleftrightarrow\quad
y\ge 2x^3-3x^2+3x-1.
\]
Thus $A_{\alpha,\beta}$ is the epigraph of a cubic curve, hence it is not an affine halfspace.
\end{example}

We will work throughout with the quadratic loss
\begin{equation}\label{eq:Gquad}
G(x)=\|x\|^2,\qquad x\in\mathbb{R}^d,
\end{equation}
so that \eqref{eq:multi:general} becomes
\begin{equation}\label{eq:multi:main}
\underset{\alpha,\beta\in\mathbb{R}^d,\ A\in\mathcal{A}}{\mathrm{argmin}}\;
\mathbb{E}\Bigl[\bigl\|X-\alpha\mathbf{1}_{A}(X)-\beta\mathbf{1}_{A^c}(X)\bigr\|^2\Bigr].
\end{equation}

\medskip
\noindent\textbf{Optimizing first in $\alpha$ and $\beta$.}
Define
\[
h(\alpha,\beta,A):=\mathbb{E}\Bigl[\bigl\|X-\alpha\mathbf{1}_{A}(X)-\beta\mathbf{1}_{A^c}(X)\bigr\|^2\Bigr],
\qquad (\alpha,\beta)\in(\mathbb{R}^d)^2,\ A\in\mathcal{A}.
\]
The map $h$ is differentiable in $\alpha$ and $\beta$, and setting the gradients to zero yields
\begin{equation}\label{eq:multi:alpha}
\alpha=\frac{\mathbb{E}[X\mathbf{1}_A(X)]}{\mathbb{P}(A)}=\mathbb{E}[X\mid X\in A],
\qquad
\beta=\frac{\mathbb{E}[X\mathbf{1}_{A^c}(X)]}{\mathbb{P}(A^c)}=\mathbb{E}[X\mid X\notin A].
\end{equation}
Plugging \eqref{eq:multi:alpha} back into $h$ gives, after elementary rearrangements,
\begin{equation}\label{eq:multi:hA}
h(A)
=\mathbb{E}\|X\|^2
-\frac{\bigl\|\mathbb{E}[X\mathbf{1}_A(X)]\bigr\|^2}{\mathbb{P}(A)}
-\frac{\bigl\|\mathbb{E}[X\mathbf{1}_{A^c}(X)]\bigr\|^2}{\mathbb{P}(A^c)}.
\end{equation}
Therefore the minimization problem \eqref{eq:multi:main} is equivalent to the maximization problem
\begin{equation}\label{eq:multi:pbmax}
\underset{A\in\mathcal{A}}{\mathrm{argmax}}\;
\left(
\frac{\bigl\|\mathbb{E}[X\mathbf{1}_A(X)]\bigr\|^2}{\mathbb{P}(A)}
+\frac{\bigl\|\mathbb{E}[X\mathbf{1}_{A^c}(X)]\bigr\|^2}{\mathbb{P}(A^c)}
\right).
\end{equation}

\medskip
\noindent\textbf{Halfspaces as the natural class.}
Fix the mean $\mu:=\mathbb{E}[X]$. In view of Proposition~\ref{prop:multi-optimal-set}, for the quadratic loss the
regime boundary is an affine hyperplane, hence it is natural to restrict the maximization in
\eqref{eq:multi:pbmax} to halfspaces. We therefore consider the mean-centered halfspace class
\begin{equation}\label{eq:class-H}
\mathcal{H}_\mu
:=\Bigl\{A_{u,t}:\ u\in\mathbb{S}^{d-1},\ t\in\mathbb{R}\Bigr\},
\qquad
A_{u,t}:=\{x\in\mathbb{R}^d:\ \langle x-\mu,u\rangle\le t\}.
\end{equation}

\begin{remark}
The uncentered halfspace $\{x:\langle x,u\rangle\le \tau\}$ is the same as $A_{u,t}$ with
$\tau=\langle\mu,u\rangle+t$. Thus centering at $\mu$ is only a re-parameterization.
\end{remark}

If the sets $A$ are of the form $A=A_{u,t}$ we define the objective
\begin{equation}\label{eq:multi:Fut}
F(u,t)
:=\frac{\bigl\|\mathbb{E}[X\mathbf{1}_{A_{u,t}}(X)]\bigr\|^2}{\mathbb{P}(A_{u,t})}
+\frac{\bigl\|\mathbb{E}[X\mathbf{1}_{A_{u,t}^c}(X)]\bigr\|^2}{\mathbb{P}(A_{u,t}^c)},
\qquad \|u\|=1,\ t\in\mathbb{R}.
\end{equation}
In contrast with the one–dimensional case, even for weakly symmetric distributions it is \emph{not}
automatic that the maximizing halfspace is determined solely by the mean. The next subsections discuss
structural conditions (e.g.\ ellipticity) under which the optimizer can nevertheless be characterized.

The optimization problem in the multidimensional case is much more delicate and we will consider here only a special class of random variables for which we can argue about the identification of the maximum point as in the case of one dimensional case.  

Before we introduce some results we define the class of densities for which we can really prove some positive results about the identification of the mean as the optimal point for the separation space.

\begin{definition}[Elliptical Distribution]
    A random vector $X \in \mathbb{R}^d$ is said to have a centered \textit{elliptical distribution} with positive-definite scatter matrix $\Sigma \in \mathbb{R}^{d \times d}$ if its probability density function (PDF) exists and is of the form
    \[
        f_X(x) = C_d \det(\Sigma)^{-1/2} g\left( x^T \Sigma^{-1} x \right),
    \]
    where $g: [0, \infty) \to [0, \infty)$ is a non-negative density generator function satisfying $\int_0^\infty s^{d/2-1} g(s) \, ds < \infty$, and $C_d$ is the normalization constant.
\end{definition}

A convenient sufficient condition for log-concavity is that $g$ is log-concave and nonincreasing on $[0,\infty)$.
Equivalently, one may write $g(s)=e^{-\varphi(s)}$ with $\varphi:[0,\infty)\to\mathbb{R}$ convex and nondecreasing,
so that
\[
f_X(x)\propto \exp\!\left(-\varphi\!\left((x-\mu)^T\Sigma^{-1}(x-\mu)\right)\right)
\]
is log-concave. Examples are densities of the form $\exp(-\|\Sigma^{-1/2}(x-\mu)\|^p)$ with $p\ge 1$ which include the Gaussians as particular cases.

\begin{lemma}[Linearity of Conditional Expectation for Elliptical Distributions]
    Let $X \in \mathbb{R}^d$ be a centered random vector with an elliptical distribution and scatter matrix $\Sigma$. For any two linearly independent vectors $u, v \in \mathbb{R}^d$, the conditional expectation of the projection $\langle X, u \rangle$ given $\langle X, v \rangle = t$ is linear in $t$:
    \[
        \mathbb{E}[\langle X, u \rangle \mid \langle X, v \rangle = t] = \rho_{u,v} t,
    \]
    where $\rho_{u,v} = \frac{u^T \Sigma v}{v^T \Sigma v}$. Consequently, the function $\phi_{u,v}(t) = \mathbb{E}[\langle X, u \rangle \mid \langle X, v \rangle = t]$ satisfies $\phi(0)=0$ and the ratio $\phi(t)/t$ is constant (and thus non-increasing) on $(0, \infty)$.
\end{lemma}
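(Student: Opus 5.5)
The plan is to reduce the claim to a property of a linear change of variables that preserves the elliptical form. First we normalize. Since $\Sigma$ is positive definite, set $Z:=\Sigma^{-1/2}X$. The density of $X$ depends on $x$ only through $x^T\Sigma^{-1}x=\|\Sigma^{-1/2}x\|^2$, so $Z$ has density $C_d\,g(\|z\|^2)$. This is spherically symmetric: $Z\overset{d}{=}QZ$ for every orthogonal $Q$. Now write
\[
\langle X,u\rangle=\langle Z,a\rangle,\qquad \langle X,v\rangle=\langle Z,b\rangle,\qquad a:=\Sigma^{1/2}u,\quad b:=\Sigma^{1/2}v .
\]
Since $\Sigma^{1/2}$ is invertible, $a$ and $b$ are linearly independent, and in particular $b\neq0$.

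Next we decompose $a$ along $b$ and its orthogonal complement:
\[
a=\rho\, b+c,\qquad \rho=\frac{\langle a,b\rangle}{\|b\|^2}=\frac{u^T\Sigma v}{v^T\Sigma v},\qquad \langle c,b\rangle=0.
\]
Then
\[
\E[\langle X,u\rangle\mid \langle X,v\rangle=t]=\rho\, t+\E[\langle Z,c\rangle\mid \langle Z,b\rangle=t].
\]
So it suffices to show that the last conditional expectation vanishes.

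For that we use a reflection. Let $Q$ be the orthogonal reflection that fixes the direction $b$ and acts as $-\mathrm{Id}$ on $b^\perp$, that is, $Q=2bb^T/\|b\|^2-I$. Then $\langle QZ,b\rangle=\langle Z,b\rangle$ and $\langle QZ,c\rangle=-\langle Z,c\rangle$. Because $(\langle Z,c\rangle,\langle Z,b\rangle)\overset{d}{=}(-\langle Z,c\rangle,\langle Z,b\rangle)$, the conditional law of $\langle Z,c\rangle$ given $\langle Z,b\rangle=t$ is symmetric for a.e.\ $t$. Its conditional mean is finite, since $\E|\langle Z,c\rangle|<\infty$ follows from the integrability condition on $g$ when $\E\|X\|<\infty$, as in the standing assumptions. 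A symmetric law with finite mean has mean $0$, which gives the claim with $\rho_{u,v}=\rho$.

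The statements about $\phi_{u,v}$ are then immediate: $\phi_{u,v}(0)=0$, and $\phi_{u,v}(t)/t=\rho_{u,v}$ is constant on $(0,\infty)$.

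The main obstacle is rigor in the conditioning: conditional expectations are only defined a.e.\ in $t$. I would handle this by choosing the version given by the explicit conditional density. That density is obtained by integrating $g(\|z\|^2)$ over the hyperplane $\{\langle z,b\rangle=t\}$, which is a continuous-in-$t$ construction whenever the marginal density is positive. On that hyperplane the integrand is invariant under $z\mapsto Qz$, so the symmetry holds for every such $t$ and not just almost every $t$. Linear independence of $u,v$ is not actually needed, beyond $v\neq0$.
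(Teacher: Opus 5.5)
Your proof is correct, but it reaches the result by a different mechanism than the paper.

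The paper passes to the pair $Y=(\langle X,v\rangle,\langle X,u\rangle)$. It uses closure of elliptical laws under linear maps to write the joint density as a function of $y^T\Sigma_Y^{-1}y$. It then completes the square in $y_2$ to see that the conditional density given $Y_1=t$ is a function of $(y_2-\rho_{u,v}t)^2$, hence symmetric about $\rho_{u,v}t$.

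You instead whiten once, $Z=\Sigma^{-1/2}X$, split $a=\Sigma^{1/2}u$ as $\rho b+c$ with $c\perp b$, and remove $c$ with the reflection $Q=2bb^T/\|b\|^2-I$. What your route buys:
\begin{itemize}
\item It stays in $\mathbb{R}^d$ with the original generator, so it needs neither the closure property nor a $2\times2$ inversion. The paper writes the bivariate density with the same $g$, whereas the two-dimensional marginal generally has a different generator (obtained by integrating out $d-2$ variables). That slip is harmless only because the argument uses nothing beyond dependence on the quadratic form.
\item Since you never need $\Sigma_Y$ invertible, you also cover $u\parallel v$. This case is actually used when the paper later assembles the full vector $\E[Y\mid Z]$ in Step 1 of Theorem~\ref{t:dim:elli}, where projections onto directions parallel to $u$ occur.
\item The identity $(\langle Z,c\rangle,\langle Z,b\rangle)\overset{d}{=}(-\langle Z,c\rangle,\langle Z,b\rangle)$ gives the almost-sure statement in one line, since $\E[\langle Z,c\rangle\mid\langle Z,b\rangle]$ equals its own negative.
\end{itemize}
What the paper's computation buys in return is an explicit formula $h\bigl((y_2-\rho_{u,v}t)^2\bigr)$ for the one-dimensional conditional density, not just its center of symmetry.

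Two small corrections. First, integrability of $\langle Z,c\rangle$ comes from $X$ being centered (so $\E\|X\|<\infty$). It does not come from $\int_0^\infty s^{d/2-1}g(s)\,ds<\infty$, which is only the normalization condition. Second, the slice construction need not be continuous in $t$ for a general $g$. You do not need continuity, though: the $Q$-symmetry holds on every hyperplane $\{\langle z,b\rangle=t\}$ on which the slice integrals are finite and positive.
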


\begin{proof}
    Consider the bivariate random vector $Y = (Y_1, Y_2)^T = (\langle X, v \rangle, \langle X, u \rangle)^T$. Since the class of elliptical distributions is closed under linear transformations, $Y$ follows a centered bivariate elliptical distribution with scatter matrix
    \[
        \Sigma_Y = \begin{pmatrix} v^T \Sigma v & v^T \Sigma u \ u^T \Sigma v & u^T \Sigma u \end{pmatrix} = \begin{pmatrix} \sigma_{11} & \sigma_{12} \ \sigma_{21} & \sigma_{22} \end{pmatrix}.
    \]
    The conditional density of $Y_2$ given $Y_1 = t$ is proportional to the slice of the joint density:
    \[
        f_{Y_2|Y_1}(y_2 \mid t) \propto g\left( \begin{pmatrix} t \ y_2 \end{pmatrix}^T \Sigma_Y^{-1} \begin{pmatrix} t \ y_2 \end{pmatrix} \right).
    \]
    Using the formula for the inverse of a block matrix, the quadratic form $Q(y_2) = (t, y_2) \Sigma_Y^{-1} (t, y_2)^T$ can be rewritten by completing the square with respect to $y_2$:
    \[
        Q(y_2) = c_1 \left( y_2 - \frac{\sigma_{12}}{\sigma_{11}} t \right)^2 + c_2(t),
    \]
    where $c_1 = (\det \Sigma_Y)^{-1} \sigma_{11} > 0$ and $c_2(t)$ is independent of $y_2$. 
    
    The conditional density is therefore of the form $h((y_2 - \mu(t))^2)$, where $\mu(t) = \frac{\sigma_{12}}{\sigma_{11}} t$. Since this density is symmetric about $\mu(t)$, the conditional expectation corresponds to the center of symmetry:
    \[
        \mathbb{E}[Y_2 \mid Y_1 = t] = \frac{\sigma_{12}}{\sigma_{11}} t = \frac{u^T \Sigma v}{v^T \Sigma v} t.
    \]
    This confirms the linearity of the regression function $\phi_{u,v}(t)$.
\end{proof}

\begin{definition}[Weak symmetry]\label{def:weak-sym}
A random vector $X\in\mathbb{R}^d$ with mean $\mu$ is called \emph{weakly symmetric} if for every halfspace $H$
whose boundary hyperplane contains $\mu$, one has $\mathbb{P}(X\in H)=1/2$.
Equivalently, for every unit $u\in\mathbb{S}^{d-1}$,
\[
\mathbb{P}(\langle X-\mu,u\rangle\le 0)=\frac12.
\]
\end{definition}

\begin{theorem}\label{t:dim:elli}
Let $X\in\mathbb{R}^d$ have an \emph{elliptical} density with mean $\mu$ and positive definite scatter matrix $\Sigma$.
Assume moreover that for every unit $u\in\mathbb{S}^{d-1}$, the one-dimensional projection
$Z_u:=\langle X-\mu,u\rangle$ has a \emph{weakly symmetric log-concave density} on $\mathbb{R}$
(e.g.\ this holds if $X$ has a log-concave density).
Then:

\smallskip
\noindent (i) For every fixed unit $u$, the map $t\mapsto F(u,t)$ is maximized at $t=0$.

\smallskip
\noindent (ii) The map $u\mapsto F(u,0)$ is maximized when $u$ is an eigenvector of $\Sigma$
corresponding to the largest eigenvalue $\lambda_{\max}(\Sigma)$.
\end{theorem}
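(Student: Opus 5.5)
Without loss of generality I will take $\mu=0$: translating $X$ by $-\mu$ changes $F(u,t)$ only by the additive constant $\|\mu\|^2$. This uses the same cancellation as in the proof of Theorem~\ref{t:1}, because the cross term $2\langle \mu,\E[X\mathbf 1_A]+\E[X\mathbf 1_{A^c}]\rangle$ collapses to $2\|\mu\|^2$ once centered.

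\textbf{Reduction of $F(u,t)$ to the projection.} Since $\E X=0$, we have $\E[X\mathbf 1_{A^c}]=-\E[X\mathbf 1_A]$. Hence
\[
F(u,t)=\frac{\|\E[X\mathbf 1_{\{Z_u\le t\}}]\|^2}{\p(Z_u\le t)\,\p(Z_u>t)} .
\]
To compute the vector $\E[X\mathbf 1_{\{Z_u\le t\}}]$, I condition on $Z_u$ and use the linearity lemma componentwise. Applied with $v=u$ and each coordinate direction (or any $w$, including $w$ parallel to $u$, where it is trivial), it gives
\[
\E[X\mid Z_u=s]=\frac{\Sigma u}{u^T\Sigma u}\,s .
\]
Therefore $\E[X\mathbf 1_{\{Z_u\le t\}}]=\frac{\Sigma u}{u^T\Sigma u}\E[Z_u,Z_u\le t]$, and so
\[
F(u,t)=\frac{\|\Sigma u\|^2}{(u^T\Sigma u)^2}\,f_{Z_u}(t),
\]
with $f_{Z_u}$ the one-dimensional functional of \eqref{e:f:V}.

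\textbf{Part (i).} This follows immediately from the factorization. The prefactor does not depend on $t$. Since $Z_u$ is centered, weakly symmetric and log-concave, Theorem~\ref{t:1}(3) shows that $f_{Z_u}$ is uniquely maximized at $t=0$.

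\textbf{Part (ii), the scale step.} At $t=0$ the projection must be normalized. Write $Z_u\overset{d}{=}\sqrt{u^T\Sigma u}\,W$, where $W$ is the standardized one-dimensional marginal. For elliptical laws this marginal is independent of $u$ by affine invariance: $X=\Sigma^{1/2}Y$ with $Y$ spherical, so $\langle X,u\rangle=\langle Y,\Sigma^{1/2}u\rangle\overset{d}{=}\|\Sigma^{1/2}u\|\,Y_1$. Then $\p(Z_u\le0)=1/2$ and $\E[Z_u,Z_u\le0]=-\sqrt{u^T\Sigma u}\,c$, where $c:=\E[Y_1^+]$ is a constant. This gives
\[
F(u,0)=4c^2\,\frac{\|\Sigma u\|^2}{u^T\Sigma u}=4c^2\,\frac{w^T\Sigma w}{w^Tw},\qquad w=\Sigma^{1/2}u .
\]

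\textbf{Part (ii), maximizing over $u$.} The last expression is a Rayleigh quotient. It is maximized when $w$, equivalently $u$, is a top eigenvector of $\Sigma$, and the maximum value is $4c^2\lambda_{\max}(\Sigma)$.

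\textbf{Main obstacle.} The delicate step is the vector-valued conditional expectation. It requires the linearity lemma to hold for all directions simultaneously, including $u$ itself, and it requires justifying the interchange $\E[X\mathbf 1_{\{Z_u\le t\}}]=\E[\E[X\mid Z_u]\mathbf 1_{\{Z_u\le t\}}]$. I would handle both through the representation $X=\Sigma^{1/2}Y$ with $Y$ spherical. Rotating so that $\Sigma^{1/2}u$ is parallel to $e_1$, the symmetry $Y_j\mapsto -Y_j$ for $j\ge2$ shows directly that $\E[Y\mid Y_1]=Y_1e_1$. This yields the formula without appealing to conditional densities.
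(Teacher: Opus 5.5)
Your proposal is correct and follows essentially the same route as the paper. You center at $\mu$, which only adds the constant $\|\mu\|^2$, and use the linear regression $\E[X\mid Z_u]=\frac{\Sigma u}{u^T\Sigma u}Z_u$ to factor $F(u,t)$ as $\frac{\|\Sigma u\|^2}{(u^T\Sigma u)^2}f_{Z_u}(t)$; then you invoke Theorem~\ref{t:1}(3) for (i) and use $Z_u\overset{d}{=}\sqrt{u^T\Sigma u}\,Z_0$ to reduce (ii) to the Rayleigh quotient of $\Sigma$ at $\Sigma^{1/2}u$. The only deviation is that you derive the vector regression identity from $X=\Sigma^{1/2}Y$ with $Y$ spherical and reflection symmetry, not from the bivariate conditional-density lemma; this is a cleaner justification and also covers the case $v=u$, which the paper's lemma (stated for linearly independent vectors) formally excludes.
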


\begin{proof}
\emph{Step 0: reduction to the centered case.}
Let $Y:=X-\mu$. For $A_{u,t}=\{\langle Y,u\rangle\le t\}$ we have
\[
\mathbb{E}[X\mathbf{1}_{A_{u,t}}]=\mu\,\mathbb{P}(A_{u,t})+\mathbb{E}[Y\mathbf{1}_{A_{u,t}}],
\qquad
\mathbb{E}[X\mathbf{1}_{A_{u,t}^c}]=\mu\,\mathbb{P}(A_{u,t}^c)+\mathbb{E}[Y\mathbf{1}_{A_{u,t}^c}].
\]
Since $\mathbb{E}[Y]=0$, we have $\mathbb{E}[Y\mathbf{1}_{A_{u,t}^c}]=-\mathbb{E}[Y\mathbf{1}_{A_{u,t}}]$,
and a direct expansion shows
\begin{equation}\label{eq:shift-decomp}
F(u,t)=\|\mu\|^2+\widetilde F(u,t),
\end{equation}
where
\[
\widetilde F(u,t)
:=\frac{\bigl\|\mathbb{E}[Y\,\mathbf{1}_{\{\langle Y,u\rangle\le t\}}]\bigr\|^2}{\mathbb{P}(\langle Y,u\rangle\le t)}
+\frac{\bigl\|\mathbb{E}[Y\,\mathbf{1}_{\{\langle Y,u\rangle> t\}}]\bigr\|^2}{\mathbb{P}(\langle Y,u\rangle> t)}.
\]
Hence maximizing $F(u,t)$ over $t$ is equivalent to maximizing $\widetilde F(u,t)$, so from now on we assume $\mu=0$.

\medskip
\emph{Step 1: reduce $\widetilde F(u,t)$ to a one-dimensional functional.}
Fix unit $u$ and set $Z:=\langle Y,u\rangle$. For centered elliptical $Y$ with scatter matrix $\Sigma$,
the linear regression property gives (take $v=u$ in the standard bivariate elliptical regression formula)
\begin{equation}\label{eq:ell-reg}
\mathbb{E}[Y\mid Z=s]=\frac{\Sigma u}{u^T\Sigma u}\,s.
\end{equation}
Therefore,
\[
\mathbb{E}\bigl[Y\,\mathbf{1}_{\{Z\le t\}}\bigr]
=\mathbb{E}\!\left[\mathbb{E}[Y\mid Z]\mathbf{1}_{\{Z\le t\}}\right]
=\frac{\Sigma u}{u^T\Sigma u}\,\mathbb{E}\bigl[Z\,\mathbf{1}_{\{Z\le t\}}\bigr].
\]
Since $\mathbb{E}[Y]=0$, we also have
$\mathbb{E}[Y\mathbf{1}_{\{Z>t\}}]=-\mathbb{E}[Y\mathbf{1}_{\{Z\le t\}}]$.
Hence
\begin{equation}\label{eq:F-factor}
\widetilde F(u,t)
=\frac{\|\Sigma u\|^2}{(u^T\Sigma u)^2}\,
\frac{\mathbb{E}[Z\,\mathbf{1}_{\{Z\le t\}}]^2}{\mathbb{P}(Z\le t)\mathbb{P}(Z>t)}.
\end{equation}
The $u$-dependent prefactor is constant in $t$, so maximizing $\widetilde F(u,t)$ over $t$ is equivalent to
maximizing the one-dimensional quantity
\[
t\ \longmapsto\ \frac{\mathbb{E}[Z\,\mathbf{1}_{\{Z\le t\}}]^2}{\mathbb{P}(Z\le t)\mathbb{P}(Z>t)}.
\]

\medskip
\emph{Step 2: maximize in $t$ (fixed $u$).}
By assumption, $Z$ has a weakly symmetric log-concave density on $\mathbb{R}$, with mean $0$.
Applying Theorem~\ref{t:1} (the one-dimensional monotonicity theorem) to $Z$ shows that the above
one-dimensional functional is maximized at $t=0$. By \eqref{eq:F-factor}, the same holds for $\widetilde F(u,t)$
and hence for $F(u,t)$. This proves (i).

\medskip
\emph{Step 3: maximize in $u$ at $t=0$.}
At $t=0$, weak symmetry gives $\mathbb{P}(Z\le 0)=\mathbb{P}(Z>0)=1/2$, and symmetry yields
$\mathbb{E}[Z\mathbf{1}_{\{Z\le 0\}}]=-\mathbb{E}[Z_+]$, where $Z_+=\max\{Z,0\}$.
Moreover, for elliptical $Y$ one has the scaling property
\[
Z=\langle Y,u\rangle \ \stackrel{d}{=}\ \sqrt{u^T\Sigma u}\,Z_0,
\]
where $Z_0$ is the projection in a fixed direction for the standardized scatter $\Sigma=I$ (its law does not depend on $u$).
Thus $\mathbb{E}[Z_+]=c_0\sqrt{u^T\Sigma u}$ for the constant $c_0:=\mathbb{E}[(Z_0)_+]>0$.
Plugging into \eqref{eq:F-factor} at $t=0$ gives
\[
\widetilde F(u,0)
=\frac{\|\Sigma u\|^2}{(u^T\Sigma u)^2}\cdot
\frac{\bigl(c_0^2(u^T\Sigma u)\bigr)}{(1/2)(1/2)}
=4c_0^2\,\frac{u^T\Sigma^2 u}{u^T\Sigma u}.
\]
Therefore maximizing $u\mapsto \widetilde F(u,0)$ over $u\neq 0$ is equivalent to maximizing
\[
R(u):=\frac{u^T\Sigma^2 u}{u^T\Sigma u}
=\frac{(\Sigma^{1/2}u)^T\Sigma(\Sigma^{1/2}u)}{\|\Sigma^{1/2}u\|^2},
\]
which is exactly the Rayleigh quotient of $\Sigma$ evaluated at the vector $\Sigma^{1/2}u$.
Hence $\sup_{u\neq 0}R(u)=\lambda_{\max}(\Sigma)$ and the maximizers are precisely the eigenvectors
associated to $\lambda_{\max}(\Sigma)$. This proves (ii). \qedhere
\end{proof}

\subsection{Considerations on convexity}\label{SS:3:3}\hfill\

In this section, we show that the convexity assumed in Theorem~\ref{t:1} is a key condition. To do so, we will proceed by providing an example of density of the form $e^{-V(x)}$ with $V(x)$ continuous and symmetric, but not convex and we will prove that for this case $c=0$ does not provide the maximum of the function $f$ defined in the Theorem \ref{t:1}.

\begin{example}[A symmetric non-log-concave law with two maximizers of $f_X$]
\label{ex:two-maxima-fx}
Let $X$ have the (even) density
\[
p(x)=
\begin{cases}
\frac{21}{8}, & |x|\le \frac{1}{10} \\
\frac{1}{8}, & \frac{1}{10}<|x|\le 2, \\
0, & |x|>2.
\end{cases}
\]
Then $p$ integrates to $1$ and, by symmetry, $\E[X]=0$.

\smallskip
\noindent\emph{Failure of log-concavity.}
For $x=0$, $y=0.3$ one has $p(0)=\frac{21}{8}$ and $p(0.3)=\frac18$, but
$p(0.15)=\frac18<\sqrt{p(0)p(0.3)}=\frac{\sqrt{21}}{8}$,
so $p$ is not log-concave.

\smallskip
\noindent\emph{Two global maximizers of $f_X$.}
For $t\in(-2,2)$ define
\[
f_X(t)=\frac{\E[X\mathbf 1_{\{X\le t\}}]^2}{\p(X\le t)}
+\frac{\E[X\mathbf 1_{\{X>t\}}]^2}{\p(X>t)}.
\]
We have $\E[X]=0$ and $f_X$ admits the explicit form
\begin{equation}\label{eq:fx-explicit-piecewise}
 f_X(t)=f_X(-t)=
 \begin{cases}
 \displaystyle
 \frac{\Bigl(-\frac{21}{80}+\frac{21}{16}t^2\Bigr)^2}{\Bigl(\frac12+\frac{21}{8}t\Bigr)\Bigl(\frac12-\frac{21}{8}t\Bigr)},
 & 0\le t\le \frac{1}{10},\\
 \displaystyle
 \frac{(2-t)(t+2)^2}{4(6+t)},
 & \frac{1}{10}\le t\le 2.
 \end{cases}
\end{equation}
By symmetry it suffices to consider $t\ge 0$.
Differentiating yields the following simplified expressions:
\[
 f_X'(t)=
 \begin{cases}
 \displaystyle
 \frac{441\,t\,(1-5t^2)(2205t^2+281)}{50\,(4-21t)^2(4+21t)^2},
 & 0<t<\frac{1}{10},\\
 \displaystyle
 -\frac{(t+2)(t^2+8t-4)}{2(6+t)^2},
 & \frac{1}{10}<t<2.
 \end{cases}
\]
In particular, on $(0,\frac{1}{10})$ one has $f_X'(t)>0$, and on $(\frac{1}{10},2)$ the only critical point is the root of
$t^2+8t-4=0$.
Therefore $f_X$ has a unique critical point on $(0,2)$ at
\[
t^\star=2\sqrt5-4\in\Bigl(\tfrac{1}{10},2\Bigr),
\]
and $f_X$ increases on $[\tfrac{1}{10},t^\star]$ and decreases on $[t^\star,2]$.
Comparing values shows $f_X(t^\star)>f_X(\tfrac{1}{10})$, hence the global maximizers are exactly $\pm t^\star$.
\end{example}

\begin{figure}[h]
\centering
\begin{tikzpicture}
\begin{axis}[
  width=0.46\textwidth,
  height=0.36\textwidth,
  grid=both,
  xlabel={$x$}, ylabel={$p(x)$},
  title={Density $p$}
]
\addplot[domain=-2.2:2.2, samples=600]
{ifthenelse(abs(x)<=0.1, 21/8, ifthenelse(abs(x)<=2, 1/8, 0))};
\end{axis}
\end{tikzpicture}
\hfill
\begin{tikzpicture}
\begin{axis}[
  width=0.46\textwidth,
  height=0.36\textwidth,
  grid=both,
  xlabel={$t$}, ylabel={$f_X(t)$},
  title={$f_X$ (two global maxima)},
  ymin=0
]
\addplot[domain=-1.95:1.95, samples=900]
{%
  (abs(x)<=0.1) *
    ( ((-21/80 + (21/16)*abs(x)^2)^2) /
      ((1/2+(21/8)*abs(x))*(1/2-(21/8)*abs(x))) )
 + (abs(x)>0.1 && abs(x)<2) *
    ( ((2-abs(x))*(abs(x)+2)^2)/(4*(6+abs(x))) )
};
\addplot[gray,dashed] coordinates {(0.47213595,0) (0.47213595,0.38)};
\addplot[gray,dashed] coordinates {(-0.47213595,0) (-0.47213595,0.38)};
\end{axis}
\end{tikzpicture}
\caption{Example~\ref{ex:two-maxima-fx}: a symmetric, non-log-concave density with global maximizers of $f_X$
at $\pm t^\star$, where $t^\star=2\sqrt5-4\approx 0.4721$.}
\end{figure}
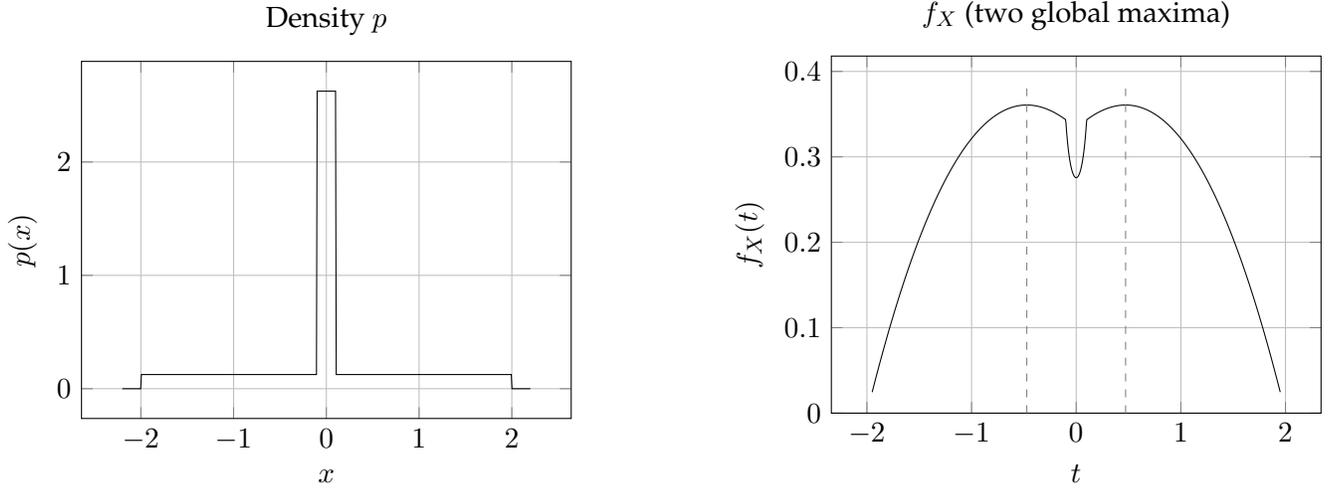

\subsection{Counterexample for the halfspace functional (integer vertices)}\label{subsec:halfspace-ctrex}

Let $X=(X_1,X_2)\sim \mathrm{Unif}(K)$ where $K\subset\mathbb{R}^2$ is the convex hexagon with
\emph{integer} vertices (listed counterclockwise)
\[
A=(-3,0),\quad B=(-1,-12),\quad C=(3,-8),\quad D=(3,0),\quad E=(1,12),\quad F=(-3,8).
\]
This hexagon is visibly non-degenerate: the vertical edges $FA$ and $CD$ have lengths $8$ and $8$.

Define
\[
R(t):=\frac{\big\|\mathbb{E}\big[X\,\mathbf{1}_{\{X_1>t\}}\big]\big\|^2}{\mathbb{P}(X_1>t)\,\mathbb{P}(X_1<t)}.
\]
For a uniform law on $K$, constants cancel and
\[
R(t)=\frac{\left\|\iint_{K\cap\{x>t\}}(x,y)\,dA\right\|^2}
{\Area(K\cap\{x>t\})\,\Area(K\cap\{x<t\})}.
\]

\medskip
\noindent\textbf{Centering.}
A shoelace/Green computation gives
\[
\Area(K)=104,\qquad \iint_K x\,dA=0,\qquad \iint_K y\,dA=0,
\]
hence $\mathbb{E}[X]=(0,0)$.

\medskip
\noindent\textbf{The cut $t=0$.}
The line $x=0$ intersects $BC$ at $(0,-11)$ and $EF$ at $(0,11)$, so
\[
K\cap\{x>0\}=\mathrm{conv}\{(0,-11),\,C,\,D,\,E,\,(0,11)\}=:P_0.
\]
One finds
\[
\Area(P_0)=52,\qquad \iint_{P_0}(x,y)\,dA=\left(\frac{199}{3},-\frac{67}{3}\right),
\qquad \Area(K\cap\{x<0\})=52,
\]
hence
\[
R(0)=\frac{\left(\frac{199}{3}\right)^2+\left(\frac{67}{3}\right)^2}{52\cdot 52}
=\frac{22045}{12168}\approx 1.8117.
\]

\medskip
\noindent\textbf{The cut $t=1$.}
The line $x=1$ intersects $BC$ at $(1,-10)$ and meets $E=(1,12)$, so
\[
K\cap\{x>1\}=\mathrm{conv}\{(1,-10),\,C,\,D,\,(1,12)\}=:P_1.
\]
One finds
\[
\Area(P_1)=30,\qquad \iint_{P_1}(x,y)\,dA=\left(\frac{166}{3},-\frac{100}{3}\right),
\qquad \Area(K\cap\{x<1\})=74,
\]
hence
\[
R(1)=\frac{\left(\frac{166}{3}\right)^2+\left(\frac{100}{3}\right)^2}{30\cdot 74}
=\frac{9389}{4995}\approx 1.8797.
\]
Therefore $R(1)>R(0)$, so even for a centered law ($\mathbb{E}X=0$) the halfspace functional in direction $e_1$
need not be maximized at the centered cut $t=0$ (nor decreasing for $t\ge 0$) without additional structure.

\begin{figure}[h]
\centering
\begin{tikzpicture}[scale=0.25]
  \coordinate (A) at (-3,0);
  \coordinate (B) at (-1,-12);
  \coordinate (C) at (3,-8);
  \coordinate (D) at (3,0);
  \coordinate (E) at (1,12);
  \coordinate (F) at (-3,8);

  \fill[orange!18] (B)--(1,-10)--(1,12)--(E)--(F)--(A)--cycle;
  \fill[blue!18] (1,-10)--(C)--(D)--(1,12)--cycle;

  \draw[thick] (A)--(B)--(C)--(D)--(E)--(F)--cycle;

  \draw[dashed,thick] (1,-14)--(1,14) node[right] {$x=1$};
  \draw[dotted,thick] (0,-14)--(0,14) node[left] {$x=0$};

  \draw[->] (-6,0)--(6,0) node[right] {$x$};
  \draw[->] (0,-14)--(0,14) node[above] {$y$};
\end{tikzpicture}
\caption{Integer-vertex hexagon $K$ and slices $x=0$ (dotted), $x=1$ (dashed).}
\end{figure}
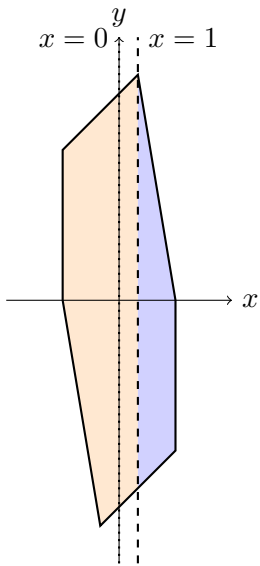

Since $R(1)>R(0)$, the halfspace objective in direction $e_1$ need not be maximized at the centered cut $t=0$ which does not agree with the case of elliptic case discussed in Theorem~\ref{t:dim:elli}.

\begin{remark}
Throughout the paper we primarily worked with laws admitting a density on the whole space $\mathbb{R}^d$
(e.g.\ log-concave densities of the form $e^{-V}$).
Nevertheless, in our counterexamples and geometric computations we consider uniform laws on bounded
convex sets. These can be viewed as densities supported on a convex body (with respect to Lebesgue measure), and the
associated halfspace functionals are still well-defined.

Moreover, such bounded-support examples can be approximated by fully supported laws: for instance, one may convolve
$\mathrm{Unif}(K)$ with a small centered Gaussian noise $\mathcal{N}(0,\varepsilon^2 I)$.
Then the resulting law has a smooth, everywhere positive density on $\mathbb{R}^d$ and its associated halfspace
functionals converge (as $\varepsilon\downarrow 0$) to those of $\mathrm{Unif}(K)$.
In particular, any counterexample exhibited on a bounded convex domain yields (by approximation) counterexamples
within the class of fully supported densities.
\end{remark}

\bibliographystyle{plainurl}
\bibliography{Risk28_final
}

\end{document}